\newtheorem{theorem}{\sc Theorem}[section]
\newtheorem{lemma}[theorem]{\sc Lemma}
\begin{document}

\title[Criterion for nilpotency]
{On nilpotency of higher commutator subgroups of a finite soluble group}

\author{Josean da Silva Alves}
\address{Department of Mathematics,  Federal University of Acre, Rio Branco, AC, 69915-900, Brazil}
\email{joseanalves@hotmail.com }

\author{Pavel Shumyatsky}
\address{Department of Mathematics, University of Brasilia, Brasilia, DF, 70900-100, Brazil}
\email{pavel@unb.br}
\keywords{Finite groups, commutators, nilpotency}
\subjclass[2010]{20D15, 20D20}
\thanks{This research was supported by DPI/UNB and FAPDF}

\maketitle

\begin{abstract} Let $G$ be a finite soluble group and $G^{(k)}$ the $k$th term of the derived series of $G$. We prove that $G^{(k)}$ is nilpotent if and only if $|ab|=|a||b|$ for any $\delta_k$-values $a,b\in G$ of coprime orders. In the course of the proof we establish the following result of independent interest: Let $P$ be a Sylow $p$-subgroup of $G$. Then $P\cap G^{(k)}$ is generated by $\delta_k$-values contained in $P$ (Lemma \ref{foca}). This is related to the so-called Focal Subgroup Theorem.
\end{abstract}

\section{Introduction}

The following sufficient condition for nilpotency of a finite group $G$ was discovered by B. Baumslag and J. Wiegold \cite{baubau}.
\medskip

{\it Let $G$ be a finite group in which $|ab|=|a||b|$ whenever the elements $a,b$ have coprime orders. Then $G$ is nilpotent.}
\medskip

Here the symbol $|x|$ stands for the order of an element $x$ in a group $G$. In \cite{BS} a similar sufficient condition for nilpotency of the commutator subgroup $G'$ was established. 
\medskip

{\it Let $G$ be a finite group in which $|ab|=|a||b|$ whenever the elements $a,b$ are commutators of coprime orders. Then $G'$ is nilpotent.}
\medskip

Of course, the conditions in both above results are also necessary for the nilpotency of $G$ and $G'$, respectively. More recently, in \cite{racai} the above results were extended as follows.

Given an integer $k\geq1$, the word $\gamma_{k}=\gamma_k(x_1,\dots,x_k)$ is defined inductively by the formulae
\[
\gamma_1=x_1,
\qquad \text{and} \qquad
\gamma_k=[\gamma_{k-1},x_k]=[x_1,\ldots,x_k]
\quad
\text{for $k\ge 2$.}
\]
The subgroup of a group $G$ generated by all values of the word $\gamma_k$ is denoted by $\gamma_k(G)$. Of course, this is the familiar $k$th term of the lower central series of $G$. If $k=2$ we have $\gamma_k(G)=G'$. The main result in \cite{racai} says that the $k$th term of the lower central series of a finite group $G$ is nilpotent if and only if $|ab|=|a||b|$ for any $\gamma_k$-values $a,b\in G$ of coprime orders. 

It was further conjectured in \cite{racai} that a similar criterion of nilpotency of the $k$th term of the derived series of $G$ holds. The purpose of the present article is to confirm the conjecture for soluble groups. 

Given an integer $k\geq0$, the word $\delta_{k}=\delta_k(x_1,\dots,x_{2^k})$ is defined inductively by the formulae
\[
\delta_0=x_1,
\qquad
\delta_k=[\delta_{k-1}(x_1,\ldots,x_{2^{k-1}}),\delta_{k-1}(x_{2^{k-1}+1},\ldots,x_{2^k})].
\]

The subgroup generated by the $\delta_k$-values in a group $G$ is the $k$th commutator subgroup of $G$, denoted by $G^{(k)}$.

\begin{theorem}\label{main} The $k$th term of the derived series of a finite soluble group $G$ is nilpotent if and only if $|ab|=|a||b|$ for any $\delta_k$-values $a,b\in G$ of coprime orders. 
\end{theorem}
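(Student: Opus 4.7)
The necessity is immediate: in a nilpotent $G^{(k)}$, the Sylow subgroups are direct factors, so any two $\delta_k$-values of coprime orders (both contained in $G^{(k)}$) lie in distinct Sylow subgroups, commute, and therefore satisfy $|ab|=|a||b|$.

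For sufficiency, the plan is to induct on $|G|$, with Lemma \ref{foca} as the main structural input. One first checks that the hypothesis is inherited by every quotient $G/N$: a $\delta_k$-value of $G/N$ lifts to a $\delta_k$-value of $G$, and using Lemma \ref{foca} one can arrange the lifts to lie in prescribed Sylow subgroups, so that coprime-order images arise from coprime-order representatives. By induction, $(G/N)^{(k)} = G^{(k)}N/N$ is nilpotent for every nontrivial $N \trianglelefteq G$. In a minimal counterexample, solubility forces a unique minimal normal subgroup $M$, elementary abelian of some prime order $p$, contained in $G^{(k)}$, with $G^{(k)}/M$ nilpotent. Since $M$ central in $G^{(k)}$ would render $G^{(k)}$ itself nilpotent, some element of $G^{(k)}$ must fail to centralize $M$; a further application of Lemma \ref{foca} lets us take such an element to be a $\delta_k$-value $b$ of $q$-power order for some prime $q \neq p$.

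The heart of the argument is a semidirect-product computation in $M \rtimes \langle b\rangle$. Since $\gcd(|b|, |M|) = 1$, Maschke's theorem yields a decomposition $M = M^{\langle b\rangle} \oplus [M, b]$; writing $a = a_+ + a_-$ accordingly (with $M$ in additive notation), one computes $(ab)^{|b|} = |b|\cdot a_+$, so that $|ab|=|a||b|$ holds if and only if $a_+ \neq 0$, i.e., $a \notin [M,b]$. Consequently the hypothesis applied to every $\delta_k$-value $a \in M$ yields $D \cap [M, b] = \{0\}$, where $D$ denotes the set of $\delta_k$-values lying in $M$. Since $D$ is $G$-invariant and every conjugate $b^g$ is again a $\delta_k$-value of $q$-power order, in fact $D \cap [M, b^g] = \{0\}$ for all $g \in G$. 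On the other hand, the $G$-simplicity of $M$ combined with non-centralization yields $\sum_g [M, b^g] = [M, \langle b^G\rangle] = M$.

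The final and most delicate step is to convert this into a genuine contradiction. A direct calculation shows that a single instance of $|ab|=|a||b|$ does not force $[a,b]=1$, so the contradiction cannot be drawn from any one pair in isolation. The anticipated resolution is to exploit the recursive structure of the word $\delta_k$ (as a commutator of two $\delta_{k-1}$-values) together with Lemma \ref{foca} to exhibit an explicit $\delta_k$-value lying in some $[M, b^g]$, contradicting the relation $D \cap [M, b^g] = \{0\}$. This translation of the global numerical hypothesis into a local commutativity statement on the module $M$ is the main obstacle I foresee.
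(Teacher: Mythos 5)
Your necessity argument is fine, and your Maschke computation in $M\rtimes\langle b\rangle$ is correct as far as it goes, but the gap you flag at the end is a genuine one, and it is exactly where the paper's proof lives. The missing idea (Lemma \ref{bbb} of the paper) is that you should apply the hypothesis not to a pair of $\delta_k$-values $a,b$ directly, but to the pair $\bigl([y,x,x],\,x^{-1}\bigr)$: if $x$ is a $\delta_k$-value normalizing a subgroup $N$ with $(|N|,|x|)=1$ and $y\in N$, then $[y,x,x]=[x^{-y},x]^x$ is again a $\delta_k$-value (the set of $\delta_k$-values is symmetric, commutator-closed and closed under conjugation), it lies in $N$, so its order is coprime to $|x|$, and the hypothesis forces $|[y,x,x]x^{-1}|=|[y,x,x]|\,|x|$. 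But $[y,x,x]x^{-1}=[x,y]\,x^{-1}\,[x,y]^{-1}$ is a conjugate of $x^{-1}$, so $[y,x,x]=1$; hence $[N,x,x]=1$ and coprime action gives $[N,x]=1$. This is precisely the ``translation of the numerical hypothesis into a commutativity statement'' that you correctly identified as the obstacle: the trick is to choose the pair so that the product is \emph{a priori} conjugate to one of its factors.

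Beyond this, the paper's global structure is simpler than your plan and avoids two shaky steps in it. It does not pass to quotients or isolate a minimal normal subgroup: a counterexample is chosen with $|G^{(k)}|$ minimal; since every $\delta_{k+1}$-value is a $\delta_k$-value, the hypothesis holds for $k+1$ and $|G^{(k+1)}|<|G^{(k)}|$, so $G^{(k+1)}$ is nilpotent and $G^{(k)}$ is metanilpotent. Lemma \ref{foca} generates each Sylow $p$-subgroup $P$ of $G^{(k)}$ by $\delta_k$-values $x$; Lemma \ref{bbb} gives $[O_{p'}(F(G^{(k)})),x]=1$; and Lemma \ref{meta} then places $x$, hence $P$, hence all of $G^{(k)}$, inside the Fitting subgroup. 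By contrast, your quotient-inheritance step is not justified: a $p$-element $\delta_k$-value of $G/N$ is the image of some $\delta_k$-value of $G$, but only its $p$-part (a power of a $\delta_k$-value, not in general a $\delta_k$-value) is a $p$-element mapping onto it, and Lemma \ref{foca} does not let you prescribe the orders of lifts. Likewise your final contradiction needs the set $D$ of $\delta_k$-values inside $M$ to be large, whereas Lemma \ref{foca} only produces $\delta_k$-values generating a Sylow subgroup of $G^{(k)}$, not of $M$; a priori $D$ could reduce to $\{0\}$.
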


It remains to be seen whether the assumption that $G$ is soluble is really necessary in this result. It seems plausible that all groups satisfying the conditions of Theorem \ref{main} are soluble.

On the other hand, it is clear that for many words $w$ a similar criterion of nilpotency of the verbal subgroup $w(G)$ cannot be established. For example, it was shown in \cite{kani} that for any $n\geq7$ the alternating group $A_n$ admits a commutator word $w$ all of whose nontrivial values have order 3. Thus, the verbal subgroup $w(G)$ need not be nilpotent even if all $w$-values have order dividing 3.

We end this short introduction by mentioning that there are several other recent results related to the theorem of Baumslag and Wiegold (see in particular \cite{bamo, ageno, motort, monakh, gumoreto, moretosaez}).

\section{Proof of Theorem \ref{main}}

Let $G$ be a finite soluble group. Recall that the group $G$ has a Sylow basis. This is a family of pairwise permutable Sylow $p_i$-subgroups $P_i$ of $G$, exactly one for each prime, and any two Sylow bases are conjugate. The basis normalizer (also known as the system normalizer) of such a Sylow basis in $G$ is $T=\bigcap _iN_G(P_i)$. The reader can consult \cite[pp. 235--240]{FiniteSolubleGroups} for general information on basis normalizers. In particular, we will use the facts that the basis normalizers are conjugate, nilpotent, and $G=T\gamma _{\infty}(G)$ where $\gamma _{\infty}(G)$ denotes the intersection of the lower central series of $G$. Moreover, basis normalizers are preserved by epimorphisms, that is, the image of $T$ in any quotient $G/N$ is a basis normalizer of $G/N$.

In the sequel we say that a subset $X$ of a group $G$ is commutator-closed if $[x,y]\in X$ for any $x,y\in X$. The set $X$ is symmetric if $X=X^{-1}$. Throughout, we use the fact that in any group the set of $\delta_k$-values is commutator-closed and symmetric.

Recall that the Fitting height of a finite soluble group is the minimal number $h=h(G)$ such that $G$ possesses a normal series of length $h$ all of whose factors are nilpotent. Throughout this article we write $G=\langle X\rangle$ to mean that $G$ is generated by the set $X$.
\begin{lemma}\label{x-clo} Any finite soluble group is generated by a commutator-closed set in which all elements have prime power order.
\end{lemma}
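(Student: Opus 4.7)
The plan is induction on $|G|$. Take a minimal normal subgroup $N$ of $G$; since $G$ is soluble, $N$ is elementary abelian of exponent $p$ for some prime $p$. By the induction hypothesis applied to $G/N$, there is a commutator-closed generating set $\bar Y$ of $G/N$ consisting of prime-power-order elements.

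For each $\bar y \in \bar Y$, say of order $q^a$, any preimage in $G$ has a $q$-part which is itself a prime-power-order lift of $\bar y$. To absorb the potential discrepancy between the $G$-commutator $[y_1, y_2]$ and a single fixed lift of $[\bar y_1, \bar y_2]$, I would take $Y$ to consist of \emph{all} prime-power-order elements of $G$ whose image in $G/N$ lies in $\bar Y$, and set $X = Y \cup N$. Then $X$ generates $G$ (it contains $N$ and a lift of each element of $\bar Y$) and consists entirely of prime-power-order elements (those in $N$ have order dividing $p$, and those in $Y$ by definition). For commutator-closedness, $[N,N] = \{1\}$ since $N$ is abelian, and $[Y, N] \subseteq N$ since $N \triangleleft G$. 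For $y_1, y_2 \in Y$, the commutator $[y_1, y_2]$ projects to $[\bar y_1, \bar y_2] \in \bar Y$, so it lies in some coset $y_3 N$ with $y_3$ a lift of $[\bar y_1, \bar y_2]$, or in $N$ itself if $[\bar y_1, \bar y_2] = 1$. In the latter case $[y_1, y_2] \in N \subseteq X$, and in the former it suffices to verify that $[y_1, y_2]$ has prime-power order, since then it automatically belongs to $Y$.

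The main obstacle is exactly this prime-power check. When $y_3$ is itself a $p$-element, the coset $y_3 N$ lies in the $p$-group $\langle y_3, N \rangle$ and the conclusion is immediate. When $y_3$ has order $q^a$ coprime to $p$, a subtler argument is required: by Schur--Zassenhaus inside $\langle y_3 \rangle N$ (using that $N$ is abelian), the prime-power lifts of $\bar y_3$ form a single coset of $[\langle y_3 \rangle, N]$ in $N$, so the problem reduces to showing that the specific error $n \in N$ with $[y_1, y_2] = y_3 n$ lies in $[\langle y_3 \rangle, N]$. This coprime-order subcase, which will need a careful structural argument using that $y_1, y_2$ are themselves prime-power lifts of elements of the inductively constructed $\bar Y$, is where the heart of the proof lives.
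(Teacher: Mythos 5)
There is a genuine gap here, and unfortunately the step you defer to the end is not merely ``where the heart of the proof lives'' --- for the set $Y$ you actually propose, it is false. Take $G=\langle n\rangle\rtimes S_3$ with $|n|=5$, where transpositions invert $n$ and $3$-cycles centralize it. Then $N=\langle n\rangle$ is a minimal normal subgroup and $G/N\cong S_3$. Note that $\bar Y=S_3$ is exactly what your own recursion produces for $S_3$ (running your construction on $S_3$ with minimal normal subgroup $A_3$ gives all of $S_3$, since every element of $S_3$ has prime power order), and it is commutator-closed, generating, and of prime-power orders. Your $Y$ then consists of all prime-power-order elements of $G$, in particular the involutions $y_1=n(12)$ and $y_2=(13)$. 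But $[y_1,y_2]=(y_1y_2)^2=(n\tau)^2=n^2\tau^2$ where $\tau=(12)(13)$ is a $3$-cycle centralizing $n$; this element has order $15$, so it lies neither in $N$ nor in $Y$, and $X=Y\cup N$ is not commutator-closed. In the language of your final reduction, here $y_3=\tau^2$ centralizes $N$, so $[\langle y_3\rangle,N]=1$ while the ``error'' is $n^2\neq 1$: the claim you hope to establish fails. The obstruction is intrinsic to lifting through a single minimal normal subgroup: the prime-power lifts $y_1,y_2$ need not sit in a common complement to $N$, and their commutator can genuinely acquire a nontrivial $p$-part.

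The paper sidesteps this with a completely different decomposition. Setting $K_1=G$ and $K_{i+1}=\gamma_\infty(K_i)$, and taking system normalizers $T_i$ of compatible Sylow bases of the $K_i$, one writes $G=T_1T_2\cdots T_h$ as a product of nilpotent subgroups with $T_j$ normalizing $T_k$ whenever $j\le k$, and then takes $X$ to be the union of the sets of prime-power-order elements of the $T_i$. If you wish to salvage an induction over a minimal normal subgroup, you would at minimum have to choose the lifts inside a fixed complement to $N$ when one exists (rather than taking all prime-power lifts) and treat the case $N\le\Phi(G)$ separately; at that point the system-normalizer argument is the cleaner route.
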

\begin{proof} Let $G$ be a finite soluble group of Fitting height $h$ and $\{P_1,P_2,\dots\}$ be a Sylow basis of $G$. Set $K_1=G$, $K_2=\gamma_\infty(G)$ and $K_{i+1}=\gamma_\infty(K_i)$ for $i=1,\dots,h$. Thus, $K_{h+1}=1$. Consider the Sylow basis $\{P_1\cap K_i,P_2\cap K_i, \dots\}$ in $K_i$ and let $T_i\leq K_i$ be the corresponding basis normalizer. So we have $K_i=T_iK_{i+1}$. The specific choice of the Sylow bases $\{P_1\cap K_i,P_2\cap K_i,\dots\}$ guarantees that $T_j$ normalizes $T_k$ whenever $j\leq k$. Note that we have the equalities $$G=T_1K_2=T_1T_2K_3=\dots=T_1T_2\cdots T_h.$$ Thus, $G$ is written as a product of $h$ nilpotent subgroups $T_i$ such that $T_j$ normalizes $T_k$ whenever $j\leq k$. For each $i=1,\dots,h$ write $X_i$ to denote the set of elements of prime power order contained in $T_i$. Let $X=\bigcup_{i=1}^hX_i$. We see that the set $X$ is commutator-closed and $G$ is generated by $X$. The proof is complete.
\end{proof}

\begin{lemma}\label{glav} Let $G=\langle X\rangle$ be a group generated by a commutator-closed set $X$. Then the commutator subgroup $G'$ is generated by commutators $[x_1,x_2]$, where $x_1,x_2\in X$.
\end{lemma}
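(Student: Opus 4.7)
The plan is to prove that the subgroup $H:=\langle [x_1,x_2] \mid x_1,x_2\in X\rangle$ coincides with $G'$. The inclusion $H\leq G'$ is immediate. For the reverse inclusion, my strategy is to establish first that $H$ is normal in $G$; then, since the images of the elements of $X$ pairwise commute in $G/H$ (their commutators lying in $H$ by construction), the quotient $G/H$ is generated by pairwise commuting elements, hence abelian, so $G'\leq H$.

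To prove normality of $H$ it suffices, since $G=\langle X\rangle$, to check that each $x\in X$ normalizes $H$, and for this it is enough to verify $[y_1,y_2]^x\in H$ on the given generators. I would use the identity $[y_1,y_2]^x=[y_1,y_2]\cdot[[y_1,y_2],x]$. The first factor is a generator of $H$ by definition. For the second, commutator-closedness of $X$ yields $[y_1,y_2]\in X$, so $[[y_1,y_2],x]\in [X,X]$ is itself a generator of $H$. Thus $[y_1,y_2]^x\in H$, and since conjugation is a group homomorphism this extends to $x^{-1}Hx\leq H$ for every $x\in X$.

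The only delicate point is upgrading this one-sided inclusion to full normality. In the finite soluble setting of the paper, this is immediate: the subgroup $H$ and its conjugate $x^{-1}Hx$ have the same order, forcing equality, so $x\in N_G(H)$. Together with $G=\langle X\rangle$ this yields $N_G(H)=G$, and the result follows as outlined in the first paragraph.
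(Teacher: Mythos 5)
Your argument is correct and is essentially the proof given in the paper: the same subgroup $H$, the same identity $y^x=y\,[y,x]$, and the same use of commutator-closedness to see that both factors are generators of $H$. The only difference is that you explicitly address the passage from the one-sided inclusion $H^x\leq H$ to $H^x=H$ (via an order count), a point the paper's proof leaves implicit; this is a reasonable extra precaution, though note that your patch covers only the finite case, which is the only case in which the lemma is applied.
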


\begin{proof} Let $Y=\{[x_1,x_2]\ \vert\ x_1,x_2\in X\}$ and $H=\langle Y\rangle$. Obviously, $H\leq G'$. We therefore only need to show that $G'\leq H$. It is sufficient to prove that $H$ is normal in $G$ as it will then be clear that $G/H$ is abelian. Choose $x\in X$ and let us show that the element $x$ normalizes the subgroup $H$. Choose a generator $y\in Y$ of $H$ and write $y^x=y[y,x]$. We see that both $y$ and $[y,x]$ belong to the set $Y$, whence $y^x\in H$.
We conclude that indeed $x$ normalizes $H$ and since $G=\langle X\rangle$, the subgroup $H$ is normal in $G$. The lemma follows.
\end{proof}

In the sequel we will use, sometimes implicitly, the following lemmas taken from \cite{afs}. By a normal subset of $G$ we mean a subset which is a union of conjugacy classes in $G$.

\begin{lemma}
\label{intersection}
Let $G$ be a finite group, and let $N$ be a normal subgroup of $G$.
If $P$ is a Sylow $p$-subgroup of $G$ and $X$ is a normal subset of $G$
consisting of $p$-elements, then $XN\cap PN=(X\cap P)N$.
In other words, if we use the bar notation in $G/N$, we have
$\overline{X}\cap \overline{P}=\overline{X\cap P}$.
\end{lemma}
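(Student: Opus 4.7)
The plan is to prove the nontrivial inclusion $XN \cap PN \subseteq (X \cap P)N$, the reverse containment being immediate from $X \cap P \subseteq X$ and $X \cap P \subseteq P$. Pick $g \in XN \cap PN$ and write $g = xm$ with $x \in X$ and $m \in N$; since $g \in PN$, we have $x = gm^{-1} \in PN$, so $x$ is a $p$-element lying inside the subgroup $PN$.

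The heart of the argument is a Frattini-style Sylow maneuver inside $PN$. Because $P$ is a Sylow $p$-subgroup of $PN$ and $x$ is a $p$-element of $PN$, some Sylow $p$-subgroup of $PN$ containing $x$ has the form $P^h$ for some $h \in PN$. Decomposing $h = p_1 n$ with $p_1 \in P$ and $n \in N$, and using $P^{p_1} = P$, we obtain $P^h = P^n$, so $x^{n^{-1}} \in P$. Since $X$ is a normal subset of $G$, we also have $x^{n^{-1}} \in X$, hence $x^{n^{-1}} \in X \cap P$. Finally, conjugation by an element of the normal subgroup $N$ preserves $N$-cosets (indeed $nxn^{-1} = x \cdot (x^{-1}nx) n^{-1} \in xN$), so $x^{n^{-1}} N = xN$, giving $g = xm \in xN = x^{n^{-1}}N \subseteq (X \cap P)N$. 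The bar reformulation $\overline{X} \cap \overline{P} = \overline{X \cap P}$ is then just the image of this equality under the quotient map $G \to G/N$.

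The only delicate point, and the step I would flag as the main obstacle, is that a generic Sylow conjugator $h \in PN$ only guarantees $x^h \in P$, and $x^h$ need not lie in the same $N$-coset as $x$; what we actually need is a conjugator inside $N$ so that the resulting element of $X \cap P$ still represents the coset $xN$. This is exactly what the decomposition $h = p_1 n$ achieves, once one notices that the $P$-factor $p_1$ can be absorbed because it stabilizes $P$ under conjugation.
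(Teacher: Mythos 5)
Your argument is correct and complete: the Sylow conjugation inside $PN$, the absorption of the $P$-part of the conjugator so that the remaining conjugator lies in $N$, and the observation that conjugating by an element of $N$ does not change the $N$-coset are exactly the points that make the lemma work. Note that the paper itself gives no proof of this statement --- it is imported from the reference \cite{afs} --- and your proof is the standard one found there, so there is nothing to flag beyond confirming that your ``delicate point'' is handled correctly.
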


\begin{lemma}
\label{from}
Let $G$ be a finite group, and let $P$ be a Sylow $p$-subgroup of $G$.
Assume that $N\le L$ are two normal subgroups of $G$, and use the
bar notation in $G/N$. Let $X$ be a normal subset of $G$ consisting of $p$-elements such that $\overline P\cap \overline L=\langle \overline P\cap \overline X \rangle$. Then $P\cap L=\langle P\cap X, P\cap N \rangle$.
\end{lemma}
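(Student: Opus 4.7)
The plan is to prove the stated equality $P\cap L=\langle P\cap X,\,P\cap N\rangle$ by a double inclusion. Set $H=\langle P\cap X,\,P\cap N\rangle$ for brevity. The key input is the preceding Lemma~\ref{intersection}: since $X$ is a normal subset of $G$ consisting of $p$-elements, it delivers the identification $\overline{P\cap X}=\overline P\cap\overline X$, so the generating set for $\overline P\cap\overline L$ supplied by the hypothesis is precisely $\overline{P\cap X}$.

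First I would dispose of the easy inclusion $H\le P\cap L$. The containment $P\cap N\subseteq P\cap L$ is immediate from $N\le L$. For $P\cap X$, the chain $\overline{P\cap X}=\overline P\cap\overline X\subseteq\overline P\cap\overline L\subseteq\overline L$ yields $(P\cap X)N\subseteq L$, and since $N\le L$ this forces $P\cap X\subseteq L$. Hence $H\le P\cap L$.

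For the reverse inclusion, pick any $g\in P\cap L$ and consider $\overline g\in\overline P\cap\overline L$. By hypothesis, $\overline P\cap\overline L=\langle\overline P\cap\overline X\rangle=\langle\overline{P\cap X}\rangle$, so there exists $u\in\langle P\cap X\rangle$ with $\overline g=\overline u$. Then $n=u^{-1}g$ lies in $N$; but it also lies in $P$ because both $g$ and $u$ do. Therefore $n\in P\cap N\subseteq H$, and $g=un\in H$, completing the proof.

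I do not expect any serious obstacle. The one point that needs attention is that the correction term picked up when lifting a factorization from $G/N$ back to $G$ must automatically end up inside $P$; this is ensured here because we lift inside $\langle P\cap X\rangle\le P$. Were $X$ not stipulated to consist of $p$-elements so that Lemma~\ref{intersection} applies, the identification $\overline{P\cap X}=\overline P\cap\overline X$ would fail and the whole argument would collapse, which is why that hypothesis is essential.
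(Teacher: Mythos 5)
Your proof is correct; the paper itself states Lemma~\ref{from} without proof, importing it from \cite{afs}, and your argument is the standard one: Lemma~\ref{intersection} identifies $\overline{P}\cap\overline{X}$ with $\overline{P\cap X}$, after which the lifting of $g\in P\cap L$ to $g=un$ with $u\in\langle P\cap X\rangle\le P$ and $n\in P\cap N$ is exactly the content of Dedekind's modular law applied to $P\cap\langle P\cap X\rangle N$. You also correctly isolate the one delicate point, namely that the correction term $u^{-1}g$ lands in $P$ precisely because the generators of $\overline{P}\cap\overline{L}$ can be chosen with preimages already inside $P$, which is where the hypothesis that $X$ consists of $p$-elements is used.
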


Let $G$ be a finite group  and $P$  a Sylow $p$-subgroup of $G$. One immediate corollary of the Focal Subgroup Theorem \cite[Theorem 7.3.4]{go} is that $P\cap G'$ can be generated by commutators  lying in $P$. It is an open problem whether the similar fact holds for other commutator words. More precisely, if $w$ is a group word, we write $w(G)$ for the verbal subgroup of $G$ generated by $w$-values. The problem whether $P\cap w(G)$ can be generated by $w$-values lying in $P$ was addressed in \cite{afs}. It was shown that if $w$ is an outer commutator word, then $P\cap w(G)$ can be generated by powers of $w$-values. In what follows we will show that if $G$ is soluble, then  $P\cap G^{(i)}$ can be generated by $\delta_i$-values lying in $P$.

\begin{lemma}\label{foca} Let $i\geq1$ and $G$ be a finite soluble group. Let $P$ be a Sylow $p$-subgroup of $G$. Then $P\cap G^{(i)}$ can be generated by $\delta_i$-values lying in $P$.
\end{lemma}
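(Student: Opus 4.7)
The plan is to proceed by induction on $|G|$; if $G^{(i)}=1$ the statement is vacuous, and the base case $i=1$ is covered directly by the Focal Subgroup Theorem \cite[Theorem 7.3.4]{go}, which identifies $P\cap G'$ as the subgroup generated by commutators $[a,g]$ with $a,[a,g]\in P$ and $g\in G$; each such commutator is a $\delta_1$-value lying in $P$.

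For the inductive step I would take a minimal normal subgroup $N$ of $G$ contained in $G^{(i)}$; by solubility, $N$ is elementary abelian of some prime exponent $q$. Let $X$ denote the collection of all $p$-elements of $G$ that happen to be $\delta_i$-values. Because $\delta_i$-values form a $G$-invariant subset of $G$ and being a $p$-element is conjugation-invariant, $X$ is a normal subset of $G$ consisting of $p$-elements, and $P\cap X$ is precisely the set of $\delta_i$-values lying in $P$. The induction hypothesis applied to $\overline G = G/N$ gives that $\overline P\cap \overline{G^{(i)}}$ is generated by the $\delta_i$-values in $\overline P$; to combine this with Lemma \ref{from} (taken with $L=G^{(i)}$) one needs the stronger identification $\overline P\cap \overline L = \langle \overline P\cap \overline X\rangle$, i.e., every $\delta_i$-value of $\overline G$ contained in $\overline P$ should arise as the image of a $p$-element $\delta_i$-value of $G$. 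Granted this, Lemma \ref{from} delivers $P\cap G^{(i)} = \langle P\cap X,\, P\cap N\rangle$, and the remaining inclusion $P\cap N\subseteq \langle P\cap X\rangle$ is trivial when $q\ne p$ (since then $P\cap N=1$) and can be handled separately when $q=p$ using the elementary-abelian $G$-module structure of $N\leq G^{(i)}\cap P$.

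The main obstacle is the lifting assertion just described. When $q=p$ it is immediate, since $\overline P=P/N$ and every preimage in $P$ is already a $p$-element. When $q\ne p$, however, a $\delta_i$-value $\overline d\in \overline P$ lifts to a $\delta_i$-value $d\in PN$ which need not be a $p$-element; moreover, the $p$-part of $d$ is not in general a $\delta_i$-value, and $N$-conjugation of $d$ leaves $\overline d$ unchanged without improving matters. To overcome this I would exploit the two structural features of the set of $\delta_i$-values highlighted just before Lemma \ref{x-clo} — commutator-closure and $G$-invariance — together with Sylow-conjugacy inside the coprime semidirect product $PN$, to construct, from $d$, an authentic $\delta_i$-value lying in $P$ with the same image modulo $N$. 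This step is the technical core of the argument, and once it is in place Lemma \ref{from} and the induction combine to yield the desired generating set for $P\cap G^{(i)}$.
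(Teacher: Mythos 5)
There is a genuine gap, and you have located it yourself: the ``lifting assertion'' needed to feed Lemma \ref{from} --- that every $\delta_i$-value of $G/N$ which is a $p$-element is congruent modulo $N$ to a $p$-element of $G$ that is itself a $\delta_i$-value --- is exactly the hard point, and your proposal does not prove it. When $N$ is a $q$-group with $q\neq p$, a lift $d=\delta_i(g_1,\dots,g_{2^i})$ of such a value need not be a $p$-element, its $p$-part is in general not a $\delta_i$-value, and the tools you invoke do not produce the required element: conjugation inside $PN$ preserves the order of $d$, and forming commutators of $\delta_i$-values moves you to a different coset of $N$ rather than correcting the $p'$-part of $d$. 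This is precisely the obstruction behind the open problem recalled just before Lemma \ref{foca}, of whether $P\cap w(G)$ is generated by $w$-values lying in $P$ for a general commutator word $w$; the result of \cite{afs} only gives generation by \emph{powers} of $w$-values, which is exactly what your unfinished step would need to improve upon.

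The paper sidesteps the lifting problem rather than solving it. By Lemma \ref{x-clo} (proved using a Sylow basis and the system normalizers of the iterated nilpotent residuals), a finite soluble $G$ has a commutator-closed generating set $X$ all of whose elements have prime power order; Lemma \ref{glav} then shows by induction that $G^{(i)}$ is generated by the set $X^{(i)}$ of $\delta_i$-values of elements of $X$, and these generators automatically have prime power order --- no $p$-part extraction is ever needed. Taking $Y_i$ to be the normal subset of conjugates of the $p$-power-order elements of $X^{(i)}$, a minimal counterexample argument with Lemma \ref{from} reduces to the case $O_{p'}(G)=1$, where some $G^{(j)}$ is a normal $p$-subgroup generated by $X^{(j)}\subseteq Y_i$, and both cases $j\leq i$ and $i<j$ close at once. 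To salvage your outline you would need either a proof of the lifting assertion (not currently available) or an analogue of Lemma \ref{x-clo}; as written, the inductive step does not go through.
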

\begin{proof} By Lemma \ref{x-clo} there is a commutator-closed set $X$ in $G$ such that every element in $X$ has prime power order and $G$ is generated by $X$. For each $i=0,1,\dots$ denote by $X^{(i)}$ the set of all $x\in X$ for which there are $x_1,\dots,x_{2^i}\in X$ such that $x=\delta_i(x_1,\dots,x_{2^i})$, and by $X_p^{(i)}$ the set of all $x\in X^{(i)}$ such that $x$ is of $p$-power order. Thus, $X_p^{(i)}$ is a set of $\delta_i$-values whose order is a $p$-power. It is clear that $X_p^{(i)}\subseteq X$. Denote by $Y_i$ the union of the conjugacy classes of elements from $X_p^{(i)}$. In other notation, $Y_i=(X_p^{(i)})^G$. We will show that $P\cap G^{(i)}$ is generated by $P\cap Y_i$.

First, we will use induction on $i$ to show that $G^{(i)}$ is generated by $X^{(i)}$. If $i=0$, this is obvious so assume that $i\geq1$ and $G^{(i-1)}$ is generated by $X^{(i-1)}$. Set $U=X^{(i-1)}$ and observe that $U$ is a commutator-closed set generating $H=G^{(i-1)}$. Lemma \ref{glav} tells us that the commutator subgroup $H'$ is generated by commutators $[u_1,u_2]$, where $u_1,u_2\in U$. It remains to note that the set of such commutators $[u_1,u_2]$ is precisely $X^{(i)}$ and $H'$ is precisely $G^{(i)}$.

Now assume that $G$ is a minimal counter-example to the statement that $P\cap G^{(i)}$ is generated by $P\cap Y_i$. If $N$ is a normal subgroup of $G$ and $\overline G=G/N$, by minimality we have $\overline P\cap\overline{ G^{(i)}}=\langle\overline P\cap\overline{ Y_i}\rangle$. Lemma \ref{from} tells us that $P\cap G^{(i)}=\langle P\cap Y_i, P\cap N\rangle$. In the case where $G$ possesses a normal $p'$-subgroup $N$ the result is now immediate since $P\cap N=1$. We therefore assume that $G$ does not possess nontrivial normal $p'$-subgroups.

Since $G$ is soluble, there exists a positive integer $j$ such that $G^{(j)}$ is nilpotent. In view of our assumptions this means that $G^{(j)}$ is a $p$-group. Recall that $G^{(j)}$ is generated by $X^{(j)}$. Since $G^{(j)}$ is a $p$-group, it follows that $X^{(j)}=X_p^{(j)}$. If $j\leq i$, then $G^{(i)}$ is a $p$-group generated by $X_p^{(i)}\subseteq Y_i$, as required.

Suppose that $i\leq j-1$. Use the equality $P\cap G^{(i)}=\langle P\cap Y_i, P\cap N\rangle$ with $N=G^{(j)}$. In this case $P\cap N$ is generated by $X_p^{(j)}\subseteq Y_i$ and again we obtain that $P\cap G^{(i)}=\langle P\cap Y_i\rangle$. The proof is complete. 
\end{proof}

Recall that a group $G$ is metanilpotent if it has a normal subgroup $N$ such that both $N$ and $G/N$ are nilpotent. The following lemma is well-known (see for example \cite[Lemma 3]{racai}). Here $F(K)$ denotes the Fitting subgroup of a group $K$ and $O_{p'}(K)$ stands for the maximal normal $p'$-subgroup of $K$.

\begin{lemma}\label{meta} Let $p$ be a prime and $G$ a metanilpotent group.  Suppose that $x$ is a $p$-element in $G$ such that $[O_{p'}(F(G)),x]=1$. Then $x\in F(G)$. 
\end{lemma}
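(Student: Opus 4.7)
The plan is to show that the normal closure $X=\langle x^G\rangle$ of $x$ in $G$ is nilpotent; since $X$ is normal in $G$, this forces $X\leq F(G)$ and hence $x\in F(G)$. Throughout I set $F=F(G)$, $A=O_{p'}(F)$ and $B=O_p(F)$, so that $F=A\times B$ by nilpotency of $F$.

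First I would observe that every conjugate $x^g$ is again a $p$-element, and since $A$ is normal in $G$, the hypothesis $[A,x]=1$ yields $[A,x^g]=1$. Thus $X$ is generated by $p$-elements that centralize $A$, so $[X,A]=1$; in particular $X\cap A\leq Z(X)$.

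Next I would prove that $X/(X\cap A)$ is a $p$-group by considering the chain $X\cap A\leq X\cap F\leq X$. Because $X\cap F$ is a subgroup of the nilpotent group $F=A\times B$, it decomposes as $(X\cap A)\times(X\cap B)$, so $(X\cap F)/(X\cap A)\cong X\cap B$ is a $p$-group. The quotient $X/(X\cap F)\cong XF/F$ lies inside the nilpotent group $G/F$ and is generated by images of the $p$-elements $x^g$, so it is contained in the unique Sylow $p$-subgroup of $G/F$ and is itself a $p$-group. Combining these facts shows that $X/(X\cap A)$ is a $p$-group.

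Finally, $X\cap A$ is a central $p'$-subgroup of $X$ with $p$-power quotient, so Schur--Zassenhaus gives $X=(X\cap A)\times P$ for a Sylow $p$-subgroup $P$ of $X$, and $X$ is nilpotent. Hence $X\leq F(G)$ and $x\in F(G)$. I do not expect any serious obstacle here; the only step that requires a moment's thought is the $p$-group claim for $XF/F$, which rests on recalling that in a nilpotent group the $p$-elements form the unique Sylow $p$-subgroup. The metanilpotent hypothesis enters cleanly twice: in the decomposition $F=A\times B$ and in the nilpotency of $G/F$.
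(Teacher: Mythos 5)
Your argument is correct, but note that the paper does not actually prove this lemma: it is quoted as well-known with a reference to \cite[Lemma 3]{racai}, so there is no in-paper proof to match. Your route --- pass to the normal closure $X=\langle x^G\rangle$, show $[X,A]=1$ from normality of $A=O_{p'}(F(G))$, squeeze $X/(X\cap A)$ between the two $p$-groups $(X\cap F)/(X\cap A)\cong X\cap B$ and $XF/F$ inside the nilpotent group $G/F$, and finish with Schur--Zassenhaus to get $X=(X\cap A)\times P$ nilpotent, hence $X\leq F(G)$ --- is sound at every step; the only cosmetic slip is your closing remark that the metanilpotent hypothesis is used for $F=A\times B$ (that only needs nilpotency of $F$, which is automatic); its real and only use is the nilpotency of $G/F$. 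For comparison, the usual proof of this lemma is slightly shorter: one checks directly that $H=F\langle x\rangle=A\times\bigl(O_p(F)\langle x\rangle\bigr)$ is nilpotent, and since $H\geq F$ and $G/F$ is nilpotent, $H$ is subnormal in $G$, so the nilpotent subnormal subgroup $H$ lies in $F(G)$. That version avoids Schur--Zassenhaus (which here is anyway overkill, since a central $p'$-subgroup with $p$-group quotient already forces nilpotency via the upper central series), but your proof is a legitimate, self-contained alternative.
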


Throughout the remaining part of the article $G$ denotes a finite soluble group for which there exists $k\geq1$ such that $|ab|=|a||b|$ for any $\delta_k$-values $a,b\in G$ of coprime orders.

\begin{lemma}\label{bbb} Let $x$ be a $\delta_k$-value in $G$ and $N$ a subgroup normalized by $x$. If $(|N|,|x|)=1$, then $[N,x]=1$.
\end{lemma}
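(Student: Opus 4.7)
The plan is to reduce to the case when $N$ is abelian by induction on $|N|$ and then, in the abelian case, exploit the closure properties of the set of $\delta_k$-values to produce a $\delta_k$-value lying inside $N$ on which the hypothesis can be applied.

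For the reduction, suppose $N$ is non-abelian. Then $N'$ is a proper $x$-invariant subgroup of $N$. Granting the abelian case, it applies to the abelian quotient $N/N'$ (which is $x$-invariant of order coprime to $|x|$) and yields $[N,x]\subseteq N'$. Induction on $|N|$ gives $[N',x]=1$, so $[[N,x],x]=1$; the standard coprime-action identity $[N,x,x]=[N,x]$ then forces $[N,x]=1$.

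For the abelian case, for each $n\in N$ I set $v_n=[x^{-1},x^n]$. The set of $\delta_k$-values is symmetric, closed under conjugation, and commutator-closed, so $x^{-1}$ and $x^n$ are $\delta_k$-values and hence so is $v_n$. Using $x^n=x[x,n]$ together with the identity $[a,bc]=[a,c][a,b]^c$, one sees that $v_n=[x^{-1},[x,n]]$, which lies in $N$ because $[x,n]\in N$ and $x^{-1}$ normalizes $N$. Thus $v_n$ is a $\delta_k$-value contained in $N$, so $|v_n|$ is coprime to $|x|$. The hypothesis $|xv_n|=|x||v_n|$, together with the standard fact that coprime-order elements $a,b$ satisfying $|ab|=|a||b|$ must commute (which follows quickly from considering $\langle a,b\rangle/\langle ab\rangle$), gives $v_n\in C_N(x)$.

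Writing $N$ additively and letting $\beta\in\mathrm{Aut}(N)$ denote conjugation by $x$, we have $[x,n]=(1-\beta)(n)$ and hence $v_n=(1-\beta^{-1})(1-\beta)(n)$. The relation $(\beta-1)v_n=0$ for all $n$ then simplifies algebraically to $(\beta-1)^3=0$ as an endomorphism of $N$. Expanding $\beta^{|x|}=1$ by the binomial theorem modulo $(\beta-1)^3$ yields $(\beta-1)\bigl(|x|\cdot\mathrm{id}_N+\binom{|x|}{2}(\beta-1)\bigr)=0$; since $|x|$ is invertible on $N$ (because $\gcd(|x|,|N|)=1$) and $\binom{|x|}{2}(\beta-1)$ is nilpotent, the right factor is invertible, forcing $\beta=1$ and thus $[N,x]=1$. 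The main obstacle is spotting the right $\delta_k$-value inside $N$: the construction $v_n=[x^{-1},x^n]$ works precisely because it combines the symmetry, normality, and commutator-closure properties of the set of $\delta_k$-values while automatically landing in $N$; once this is set up, the rest is routine algebra in $\mathrm{End}(N)$.
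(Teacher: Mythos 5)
Your overall strategy---manufacture a $\delta_k$-value inside $N$ and feed it to the hypothesis---is the right one, and your element $v_n=[x^{-1},x^n]=[x^{-1},[x,n]]$ is indeed a $\delta_k$-value lying in $N$, hence of order coprime to $|x|$. The proof breaks at the step where you extract information from $|xv_n|=|x||v_n|$. The ``standard fact'' that elements $a,b$ of coprime orders with $|ab|=|a||b|$ must commute is false. For a counterexample take $G=S_3\times S_3$, $a=((1\,2),1)$ and $b=((1\,2\,3),(1\,2\,3))$: then $|a|=2$, $|b|=3$, the first coordinate of $ab$ is a nontrivial product of a transposition and a $3$-cycle in $S_3$ and hence a transposition, so $|ab|=\mathrm{lcm}(2,3)=6=|a||b|$, yet $[a,b]\neq 1$. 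Note that $b$ even lies in the subgroup $A_3\times A_3$, which is normalized by $a$ and has order coprime to $|a|$, so the extra structure present in your situation does not rescue the implication. Your proposed justification via $\langle a,b\rangle/\langle ab\rangle$ fails because $\langle ab\rangle$ need not be normal in $\langle a,b\rangle$. Since everything downstream (the relation $(\beta-1)^3=0$ and the binomial computation) is driven by the false conclusion $v_n\in C_N(x)$, the argument collapses here; the endomorphism algebra at the end is fine but has nothing valid to act on.

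The paper's proof avoids any commutation lemma of this kind. It forms $c=[y,x,x]=[x^{-y},x]^x$ for $y\in N$, which is a $\delta_k$-value lying in $N$ and hence of order coprime to $|x|$, and then observes that $cx^{-1}=[x,y]x^{-1}[y,x]=(x^{-1})^{[y,x]}$ is a \emph{conjugate of $x^{-1}$}, so its order is exactly $|x|$. The hypothesis forces $|cx^{-1}|=|c|\,|x|$, whence $|c|=1$, i.e.\ $[y,x,x]=1$ for every $y\in N$; then $[N,x,x]=1$ together with coprimality (\cite[Theorem 5.3.6]{go}) yields $[N,x]=1$. The lesson is that the hypothesis is usable when the product of the two $\delta_k$-values has a \emph{known} order, so that multiplicativity pins down the order of one factor; it cannot be used to deduce commutativity. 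To repair your proof you would need to replace $xv_n$ by a product whose order you can compute independently, which is exactly what the choice $[y,x,x]\cdot x^{-1}$ accomplishes.
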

\begin{proof} Choose $y\in N$ and observe that $[y,x,x]=[x^{-y},x]^x$ is a $\delta_k$-value. The order of the $\delta_k$-value $[y,x,x]$ is prime to that of $x$. Therefore we must have $|[y,x,x]x^{-1}|=|[y,x,x]||x|$. However $[y,x,x]x^{-1}=[x,y]x^{-1}[y,x]$. This is a conjugate of $x^{-1}$ and so $[y,x,x]=1$. Since $y$ here is an arbitrary element of $N$, we have $[N,x,x]=1$. Now, by a well-known property of coprime automorphisms (cf. \cite[Theorem 5.3.6]{go}) we deduce that  $[N,x]=1$.
\end{proof}

We are now in a position to complete the proof of Theorem \ref{main}.

\begin{proof}[Proof of Theorem \ref{main}] It is clear that if $G^{(k)}$ is nilpotent, then $|ab|=|a||b|$ for any $\delta_k$-values $a,b\in G$ of coprime orders. So we only need to prove the converse. Since the case where $k\leq2$ was considered in \cite{BS} and \cite{baubau}, we will assume that $k\geq2$. Without loss of generality we assume that $G$ is a counter-example with $|G^{(k)}|$ as small as possible. Since $|G^{(k)}|>|G^{(k+1)}|$, we conclude that $G^{(k+1)}$ is nilpotent and therefore $G^{(k)}$ is metanilpotent. Let $P$ be a Sylow $p$-subgroup of $G^{(k)}$. Lemma \ref{foca} tells us that $P$ is generated by $\delta_k$-values. Let $x$ be a $\delta_k$-value contained in $P$. It follows from Lemma \ref{bbb} that $[O_{p'}(F(G)),x]=1$. We deduce from Lemma \ref{meta} that $x\in F(G)$. Since this happens for every $\delta_k$-value $x$ contained in $P$, we conclude that $P\leq F(G)$. Thus, an arbitrary Sylow subgroup of $G^{(k)}$ is contained in $F(G)$ and of course this implies that $G^{(k)}\leq F(G)$. Thus, $G^{(k)}$ is nilpotent.
\end{proof}


\begin{thebibliography}{10}


\bibitem{afs} C. Acciarri, G. A. Fern\'andez-Alcober, and P. Shumyatsky, {\it A focal subgroup theorem for outer commutator words}, J. Group Theory, {\bf 15} (2012),  397--405. 

\bibitem{BS} R. Bastos and P. Shumyatsky, {\it A Sufficient Condition for Nilpotency of the Commutator Subgroup}, Siberian Mathematical Journal, {\bf 57} (2016),  762--763. 

\bibitem{racai} R. Bastos, C. Monetta, and P. Shumyatsky, {\it A criterion for metanilpotency of a finite group}, J. Group Theory, {\bf 21} (2018),  713--718. 

\bibitem{bamo} R. Bastos, C. Monetta, {\it Coprime commutators in finite groups}, Comm. Algebra, {\bf 47} (2019),  4137--4147. 

\bibitem{baubau} B. Baumslag and J. Wiegold, {\it A Sufficient Condition for Nilpotency in a Finite Group}, preprint available at arXiv:1411.2877v1 [math.GR]. 

\bibitem{FiniteSolubleGroups} K. Doerk, T. Hawkes, {\it Finite Soluble Groups}, de Gruyter, 1992. 

\bibitem{ageno} A. Freitas de Andrade, A. Carrazedo Dantas,  {\it A sufficient condition for nilpotency of the nilpotent residual of a finite group}, J. Group Theory, {\bf 21} (2018), 289--293. 

\bibitem{go} D. Gorenstein, {\it Finite Groups}, Chelsea Publishing Company, New York, 1980.

\bibitem{gumoreto} R. M. Guralnick and A. Moret\'o, {\it Conjugacy classes, characters and products of elements},  Math. Nachr., {\bf 292} (2019), 1315�-1320.

\bibitem{kani} M. Kassabov and N. Nikolov, {\it Words with few values in finite simple groups},  The Quarterly Journal of Mathematics, {\bf 64} (2013), 1161--1166.

\bibitem{monakh} V. S. Monakhov, {\it  A metanilpotency criterion for a finite solvable group}, Proc. Steklov Inst. Math., {\bf 304} (2019), suppl. 1, 141-�143.

\bibitem{motort} C. Monetta and A. Tortora, {\it A nilpotency criterion for some verbal subgroups}, Bull. Aust. Math. Soc., {\bf 100} (2019), 281--289.

\bibitem{moretosaez} A. Moret\'o, A. Sa\'ez, Prime divisors of orders of products, Proc. Roy. Soc. Edinburgh Sect. A, {\bf 149} (2019), 1153�-1162.


\end{thebibliography}
\end{document}